\documentclass[a4paper,12pt]{article}     
\usepackage{amsmath,amscd,amssymb}        
\usepackage{latexsym}                     
\usepackage[english, german]{babel}                

\usepackage{theorem}                 
\usepackage{color}

\textheight 21.5cm
\textwidth 14cm
\oddsidemargin 7mm

\newtheorem{theorem}{Theorem}
\newtheorem{corollary}{Corollary}


\theorembodyfont{\rmfamily}

\newenvironment{definition}
{\smallskip\noindent{\bf Definition\/}:}{\smallskip\par}

\newenvironment{remark}
{\smallskip\noindent{\bf Remark\/}.}{\smallskip\par}


%
\newenvironment{proof}{\begin{ProofwCaption}{Proof}}{\end{ProofwCaption}}
\newenvironment{proof*}[1]{\begin{ProofwCaption}{{#1}}}{\end{ProofwCaption}}
\newenvironment{ProofwCaption}[1]%
 {\addvspace\theorempreskipamount \noindent{\it #1.}\rm}%
 {\qed \par \addvspace\theorempostskipamount}
\newcommand{\qedsymbol}{\mbox{$\Box$}}
\newcommand{\qed}{\hfill\qedsymbol}


\newcommand{\CC}{{\mathbb C}}

\newcommand{\Aut}{{\rm Aut}\,}

\title{Klein foams as families of real forms of Riemann surfaces}
\author{Sabir M.~Gusein-Zade, Sergey M.~Natanzon
\thanks{
AMS 2010 Math. Subject Classification: 30F50, 57M20, 14H30, 14H37.
}}
\date{}

\begin{document}
\selectlanguage{english}

\maketitle

\medskip
\noindent Moscow State Lomonosov University, Faculty of mechanics and mathematics,\\
Moscow, GSP-1, 119991, Russia\\
E-mail: sabir@mccme.ru

\medskip
\noindent National Research University Higher School of Economics, 20 Myasnitskaya Ulitsa, Moscow 101000, Russia\\
ITEP 25 B.Cheremushkinskaya, Moscow 117218, Russia ,\\
 e-mail: natanzons@mail.ru

\begin{abstract}
Klein foams are analogues of Riemann surfaces for surfaces with one-dimensional singularities.
They first appeared in mathematical physics (string theory etc.).
By definition a Klein foam is constructed from Klein surfaces by gluing segments on their boundaries.
We show that, a Klein foam is equivalent to a family  of real forms of a complex algebraic  curve
with some structures.
This correspondence reduces investigations of Klein foams to investigations of real forms of Riemann surfaces.
We use known properties of real forms of Riemann surfaces to describe some topological and analytic properties of Klein foams.
\end{abstract}

\section{Introduction}
A Klein foam (a seamed surface) is constructed from Klein surfaces by gluing segments on their boundaries.
Klein foams first appeared in mathematical physics (string theory, A-models) \cite{B,KR, R}
(see also \cite{MMN}) and have applications in mathematics: \cite{MV,AN1,AN3} etc.

A Klein surface is an analog of a Riemann surface for surfaces with boundaries and non-orientable ones \cite{AG,N1}.
One can consider a Klein foam as an analog of a Riemann surface for surfaces with one-dimensional singularities.
Further on we consider only compact Riemann and Klein surfaces.

Following \cite{CGN} we also require existence of a dianalytic map from a Klein foam to the complex disc.
According to \cite{CGN} such function exists if and only if the corresponding topological foam is strongly oriented.
According to \cite{N} this condition makes it possible to extend the 2D topological field theory with Klein surfaces \cite{AN}
to Klein foams.

\vspace{1ex}
In this paper we prove, that a Klein foam is a collection of real forms of a complex algebraic curve.
Any Klein surface is the quotient $S/\tau$, where $\tau :S\rightarrow S$ is an antiholomorphic involution of a Riemann surface $S$.
In terms of algebraic geometry the pair $(S,\tau)$ is a complex algebraic curve with an involution of complex conjugation.
Thus it is a real algebraic curve \cite{AG}. We will say also that $\tau$ is a real form of $S$. The fixed points of $\tau$
correspond to the boundary $\partial(S/\tau)$ of the quotient $S/\tau$ and to real points of the corresponding
real algebraic curve.
The fixed points form closed contours that are called \textit{ovals} \cite{N4}. In what follows we consider only real algebraic
curves with real points, that is with non-empty $\partial(S/\tau)$.

A general Riemann surface has not more that one antiholomorphic involution. However, there exist Riemann surfaces
with several antiholomorphic involutions \cite{Na1,Na2,Na3}. Here we prove that the Klein foams are in one-to-one correspondence
with the equivalence classes of the families of real forms of Riemann surfaces, i.e. of the collections
$\{\widehat{S}, G, (G_1,\widehat{\tau}_1),\ldots, (G_r,\widehat{\tau}_r) \}$ consisting of:
\newline
1) a compact Riemann surface $\widehat{S}$;
\newline
2) a finite subgroup $G$ of the group $\Aut(\widehat{S})$ of holomorphic automorphisms of $\widehat{S}$;
\newline
3) real forms $\widehat{\tau}_1,\ldots, \widehat{\tau}_r$ of $\widehat{S}$ such that $\widehat{\tau}_i G\widehat{\tau}_i=G$;
\newline
4) subgroups $G_i\subset G$ ($i=1,2,\ldots, r$) generating $G$ and such that $\widehat{\tau}_i G_i\widehat{\tau}_i=G_i$.


A collection of real forms of a Riemann surface has some non-trivial ``collective'' properties.
Initially these properties were observed in \cite{Na5,Na6,Na7,Na4}. These results were developed and refined
in a long series of publications (see \cite{BCGG} and references therein). We use these properties of real forms
to describe some non-trivial combinatorial properties of analytic structures of Klein foams. In particular,
we  give bounds for the number of non-isomorphic Klein surfaces in a Klein foam and for the total number of ovals in a Klein foam.

\section{Klein foams}

A Klein foam is a topological foam with an analytic structure. A topological foam is obtained from surfaces with boundaries
by gluing some segments on the boundaries \cite{R,AN1}. For our goal we modify this definition considering instead of a surface
with boundary a surface without boundary, but with an involution such that the corresponding
quotient is homeomorphic to the first one. This change of the definition does not change the notion
itself.

A {\em generalized graph} is a one-dimensional space which consist of finitely many vertices and edges,
where edges are either segments connecting {\bf different} vertices or isolated circles without vertices on them.
A pair of vertices may be connected by several edges.

\vspace{1ex}

A {\em normal topological foam} $\Omega$ is a triple $(S,\Delta,\varphi)$, where
\vspace{-1ex}
\begin{itemize}
\item $S = S(\Omega)$ is a closed (i.e. compact without boundary, but usually disconnected) oriented surface
with a reversing the orientation involution with the fixed point set being a closed curve $L\neq\emptyset$,
that is finite disconnected union of simple contours;
\item $\Delta=\Delta(\Omega)$ is a generalized graph;
\item $\varphi=\varphi_{\Omega}: L\to\Delta$ is {\em the gluing map}, that is, a map such that:
\newline (a) $\mbox{Im\,}\varphi=\Delta$;
\newline (b) on each connected component of $L$, $\varphi$ is a homeomorphism on a circle in $\Delta$;
\newline (c) for an edge $l$ of $\Delta$, any connected component of $S$ contains at most one connected
component of $\varphi^{-1}(l\setminus\partial l)$;
\newline (d) (the normality condition) for $\check{\Omega}=S\cup_{\varphi}\Delta$ (the result of the gluing
of $S$ along $\Delta$) and for each vertex $v$ from the set $\Omega_b$ of vertices of the graph $\Delta$,
its punctured neighbourhood in $\check{\Omega}$ is connected.
\end{itemize}

A triple $(S,\Delta,\varphi)$ which satisfies all the properties above but (c) will be called a {\em topological pseudofoam}.
The same terminology will be applied to analytic and Klein foams defined below.

Normal topological foams arise naturally in the theory of Hurwitz numbers \cite{AN1,AN3}.
A foam $\Omega=(S,\Delta,\varphi)$ will be called {\em connected} if $\check{\Omega}$ is connected.
In what follows topological foams are assumed to be normal and connected. Let $\Omega_{b}$
be the set of vertices of the graph $\Delta$.

A \textit{morphism} $f$ of topological foams $\Omega^{\prime}\rightarrow \Omega^{\prime\prime}$
($\Omega^{\prime}=(S^{\prime}, \Delta^{\prime}, \varphi^{\prime})$,
$\Omega^{\prime\prime}=(S^{\prime\prime},\Delta^{\prime\prime}, \varphi^{\prime\prime})$) is a pair $(f_{S},f_{\Delta})$
of (continuous) maps $f_{S}:S^{\prime}\to S^{\prime \prime}$ and
$f_{\Delta}: \Delta^{\prime}\to\Delta^{\prime\prime}$ such that $f_{S}$ is an orientation preserving
ramified covering commuting with the involutions on $S^{\prime}$ and
$S^{\prime\prime}$, $\varphi^{\prime\prime}\circ f_{S} = f_{\Delta}\circ \varphi^{\prime}$ and
$f_{\Delta}|_{\Delta^{\prime}\setminus\Omega^{\prime }_{b}}$ is a
local homeomorphism $\Delta^{\prime}\setminus\Omega^{\prime}_{b}$
on $\Delta^{\prime\prime}\setminus\Omega^{\prime\prime}_{b}$.

\vspace{1ex}

An {\em analytic foam} is a topological foam $\Omega=(S,\Delta,\varphi)$, where $S$ is a compact Riemann surface,
the involution of which is antiholomorphic. A morphism $f$ of analytic foams
$\Omega^{\prime}\rightarrow \Omega^{\prime\prime}$ ($\Omega^{\prime}=(S^{\prime}, \Delta^{\prime}, \varphi^{\prime})$,
$\Omega^{\prime\prime}=(S^{\prime\prime},\Delta^{\prime\prime}, \varphi^{\prime\prime})$) is a morphism
$(f_{S},f_{\Delta})$ of the corresponding topological foams such that $f_{S}$ is complex analytic.

The simplest analytic foam is $\Omega_{0}=(\overline{\mathbb{C}}, S^1, Id)$, where $\overline{\mathbb{C}}=\mathbb{C}\cup\infty$
is the Riemann sphere with the involution $z\mapsto \bar{z}$. An \textit{analytic function} on an analytic foam $\Omega$
is a morphism of $\Omega$ to $\Omega_{0}$.

A \textit{Klein foam} is an analytic foam $\Omega=(S,\Delta,\varphi)$ admitting an everywhere locally non-constant
analytic function.
This condition appeared first in \cite{CGN} and is equivalent to the condition of being ``strongly oriented''
for the corresponding topological foam.
The category of strongly oriented topological foams \cite{N} is a subcategory of topological foams that allows
the topological field theory to be extended to the Klein topological field theory \cite{AN}.

A Klein (pseudo)foam $\Omega$ will be called \textit{compressed} if the coincidence of $F(x)$ and $F(x')$
for two points $x$ and $x'$ of $\Delta_{\Omega}$ and for all analytic functions $F$ on $\Omega$
implies that $x=x'$. For any Klein foam $\Omega$ there exists a unique compressed Klein pseudofoam $\Omega'$
with a morphism $\Psi:\Omega\rightarrow \Omega'$ such that $\Psi_S$ is an isomorphism. This follows from the following
fact. In \cite[Theorem 2.1]{CGN}, it was shown that there exists a connected Riemann surface $\check{S}$ with
an antiholomorphic involution $\check{\tau}$ and an analytic nowhere locally constant maps $f:\check{\Omega}\to \check{S}$,
commuting with the involutions on $S$ and on $\check{S}$, such that the corresponding morphism of the foams establishes
an isomorphism between the fields of analytic functions on ${\Omega}$ and on $\check{S}$ respectively.
One can see that $\Omega'$ is obtained from $\Omega$ by gluing all the points $x$ and $x'$ of $\Delta(\Omega)$
with $f(x)=f(x')$. (In particular the graph $\Delta_{\Omega'}$ coincides with the set of the real points
of $\check{S}$ cut into edges by the ramification points of $f$.) We shall say that $\Omega'$ is the \textit{compressing}
of $\Omega$.

Two Klein foams $\Omega_1$ and $\Omega_2$ will be called \textit{weakly isomorphic} if their compressings are isomorphic.

\section{Real forms.}

\begin{definition}
 An {\em equipped family of real forms} of a Riemann surface $\widehat{S}$ is a collection
 $\{\widehat{S}, G, (G_1,\widehat{\tau}_1),\ldots, (G_r,\widehat{\tau}_r) \}$ consisting of:
 \begin{itemize}
  \item a compact Riemann surface $\widehat{S}$;
  \item antiholomorphic involutions (i.e. real forms) $\{\widehat{\tau}_1,\ldots, \widehat{\tau}_r\}$ of $\widehat{S}$;
  \item a finite subgroup $G$ of the group of holomorphic automorphisms of $\Aut(S)$ such that
  $\widehat{\tau}_i G\widehat{\tau}_i=G$ for each $i=1,2,\ldots, r$;
  \item subgroups $G_i\subset G$ such that $\widehat{\tau}_i G_i\widehat{\tau}_i=G_i$ for $i=1,2,\ldots, r$
  and $G$ as a group is generated by $G_1$, \dots, $G_r$;
  \item the inclusion $G_i\subset G$ generates a morphism of Klein surfaces $S/G_i\rightarrow S/G$ that is
  a homeomorphism on any oval of $S/G$.
 \end{itemize}
\end{definition}

\begin{definition}
 Two equipped families of real forms of Riemann surfaces
 $$
  \{\widehat{S}, G, (G_1,\widehat{\tau}_1),\ldots, (G_r,\widehat{\tau}_r)\}\quad \text{and} \quad
  \{\widehat{S}, G^\prime, (G_1^\prime,\widehat{\tau}_1^\prime),\ldots, (G_r^\prime,\widehat{\tau}_r^\prime)\}
 $$
 are equivalent if there exists $h\in\Aut{\widehat{S}}, h_i\in G, l_i\in G_i$, such that
 $$
 hGh^{-1}=G',\quad  hh_iG_ih_i^{-1}h^{-1}=G_i', \quad hh_il_i\widehat{\tau}_il_i^{-1}h_i^{-1}h^{-1}=\widehat{\tau}_i'\,.
 $$

Two equipped families of real forms of Riemann surfaces
$$
 \{\widehat{S}, G, (G_1,\widehat{\tau}_1),\ldots, (G_r,\widehat{\tau}_r)\}\quad \text{and} \quad
 \{\widehat{S}', G', (G_1^\prime,\widehat{\tau}_1^\prime),\ldots, (G_r^\prime,\widehat{\tau}_r^\prime)\}
$$
 are equivalent if there exists an  analytic isomorphism $H:\widehat{S}\to\widehat{S}^\prime$
 such that
$$
 \{\widehat{S}',HGH^{-1}, (HG_1H^{-1},H\widehat{\tau}_1H^{-1}),\ldots, (HG_rH^{-1},H\widehat{\tau}_rH^{-1})\}
$$
and
$$
 \{\widehat{S}', G^\prime (G_1^\prime,\widehat{\tau}_1^\prime),\ldots, (G_r^\prime,\widehat{\tau}_r^\prime)\}
$$
are equivalent.
\end{definition}


\section{Main theorem.}

\begin{theorem}\label{T1} There exists a natural  one-to-one
correspondence between the classes of weakly isomorphic Klein foams and the equivalence classes of equipped families
of real forms of Riemann surfaces.
\end{theorem}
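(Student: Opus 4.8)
The plan is to use the compressing construction together with \cite[Theorem 2.1]{CGN} to replace each foam by the single Riemann surface $\check{S}$ that carries its field of analytic functions, and then to read off the group-theoretic data from the Galois theory of the coverings of $\check{S}$ by the components of $S$. Since two Klein foams are weakly isomorphic exactly when their compressings agree, and the compressing keeps $S$ fixed (the morphism $\Psi$ has $\Psi_S$ an isomorphism) while replacing $\Delta$ by the real locus of $\check{S}$, a weak-isomorphism class is the same datum as the pair $(\check{S},\check{\tau})$ together with the connected components $S_1,\dots,S_r$ of $S$ and the analytic maps $f_j:=f|_{S_j}:S_j\to\check{S}$, each commuting with the involutions ($f_j\tau_j=\check{\tau}f_j$), taken up to isomorphism. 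I would set up the whole correspondence at the level of this intermediate datum, handling the generic case in which the foam involution preserves each component and treating component-permuting involutions by the same bookkeeping.

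For the forward map, given such a datum I let $\widehat{S}$ be the Galois closure over $\mathcal{M}(\check{S})$ of the compositum of the function fields $\mathcal{M}(S_j)$, viewed as a compact Riemann surface, and I put $G=\mathrm{Gal}(\widehat{S}/\check{S})$ and $G_j=\mathrm{Gal}(\widehat{S}/S_j)$, so that $\check{S}=\widehat{S}/G$ and $S_j=\widehat{S}/G_j$. Because, by \cite[Theorem 2.1]{CGN}, the analytic functions of the foam are precisely the collections of functions on the $S_j$ coming from one function on $\check{S}$, any element of $\widehat{\mathcal{M}}$ invariant under every $G_j$ descends compatibly to all the $S_j$ and is therefore already a foam function, hence lies in $\mathcal{M}(\check{S})$; this gives $\widehat{\mathcal{M}}^{\langle G_1,\dots,G_r\rangle}=\mathcal{M}(\check{S})=\widehat{\mathcal{M}}^{G}$ and so $\langle G_1,\dots,G_r\rangle=G$, the generation condition. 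I then lift $\check{\tau}$ (equivalently each $\tau_j$) to an antiholomorphic involution $\widehat{\tau}_j$ of $\widehat{S}$; since $\widehat{\tau}_j$ covers an involution of $\check{S}=\widehat{S}/G$ and of $S_j=\widehat{S}/G_j$, it normalises both groups, giving $\widehat{\tau}_jG\widehat{\tau}_j=G$ and $\widehat{\tau}_jG_j\widehat{\tau}_j=G_j$. Finally, axiom (b) for a foam, that $\varphi$ is a homeomorphism of each component of $L$ onto a contour of $\Delta$, translates verbatim into the statement that $S/G_j\to S/G$ is a homeomorphism on each oval, which is condition (5); this produces an equipped family.

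Conversely, from a family $\{\widehat{S},G,(G_j,\widehat{\tau}_j)\}$ I would reconstruct the foam by setting $S_j=\widehat{S}/G_j$ with the involution $\tau_j$ induced by $\widehat{\tau}_j$ (well defined as $\widehat{\tau}_jG_j\widehat{\tau}_j=G_j$), $S=\bigsqcup_j S_j$ with fixed-point set $L=\bigsqcup_j L_j$, and $\check{S}=\widehat{S}/G$. The projections $p_j:S_j\to\check{S}$ carry the ovals $L_j$ into $\check{S}$, and by condition (5) each $p_j$ is a homeomorphism on every oval, so I may define $\Delta\subset\check{S}$ as $\bigcup_j p_j(L_j)$, cut into edges by the branch points, and $\varphi=\bigsqcup_j p_j|_{L_j}:L\to\Delta$. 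Axioms (a)--(d) of a connected normal foam then follow from $\widehat{S}$ being connected and from $G=\langle G_1,\dots,G_r\rangle$, while condition (5) is exactly what forces (b) and (c); a nonconstant meromorphic function on $\check{S}$ compatible with the real structure pulls back to an everywhere locally non-constant analytic function, so the result is a genuine Klein foam. The two maps are inverse up to the two equivalences: Galois closure is canonical, so foam $\to$ family $\to$ foam returns the same compressed datum and hence a weakly isomorphic foam, whereas family $\to$ foam $\to$ family recovers $(\widehat{S},G,\{G_j\})$ up to the analytic isomorphism $H$ and the inner conjugations permitted in the Definition. The only genuine freedom is the choice of involutive lifts $\widehat{\tau}_j$, which differ by elements of $G$ and, within a fixed $S_j$, by elements of $G_j$; this is precisely the conjugation $\widehat{\tau}_i\mapsto hh_il_i\widehat{\tau}_il_i^{-1}h_i^{-1}h^{-1}$ built into the equivalence of families, so different choices yield equivalent families, and naturality follows because an isomorphism of compressed foams induces, through the same Galois closure, an equivalence of the associated families and conversely.

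I expect the real obstacle to lie entirely in the antiholomorphic involutions rather than in the (standard) Galois dictionary for the underlying holomorphic coverings. One must show that $\check{\tau}$ lifts to an \emph{involution} $\widehat{\tau}_j$ on the common cover $\widehat{S}$ realising the prescribed real form on $S_j$, that these lifts can be organised into a single equipped family of real forms of one Riemann surface, and that the residual ambiguity matches the equivalence relation exactly; this is the phenomenon studied for collections of real forms of a fixed Riemann surface in \cite{Na5,Na6,Na7,Na4,BCGG}, which I would invoke. A related subtlety, in the reverse direction, is to guarantee that the glued surface admits an everywhere locally non-constant analytic function, that is, that the involutions induced by the various $\widehat{\tau}_j$ on $\check{S}$ agree along the glued ovals so that a common real meromorphic function exists; this is exactly what upgrades the analytic foam to a Klein foam.
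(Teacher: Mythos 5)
Your proposal follows essentially the same route as the paper: both reduce the foam via the compressing and \cite[Theorem 2.1]{CGN} to the coverings $f_i:S_i\to\check S$ commuting with the involutions, and then pass to a common Galois cover $\widehat S$ of $\check S$ dominating all the $S_i$, reading off $G$ as the deck group, $G_i$ as the subgroup corresponding to $S_i$, and $\widehat\tau_i$ as an involutive lift of $\tau_i$ --- your Galois closure of the compositum of the function fields is exactly the paper's $U/\hat\Gamma$, where $\hat\Gamma$ is the normal core in $\check\Gamma$ of the intersection of the uniformizing subgroups $\Gamma_i$. The difference is one of dialect (function-field Galois theory versus Fuchsian uniformization), and if anything you are more explicit than the paper about the inverse construction and about checking the generation condition and condition (5) of the definition.
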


\begin{proof} Consider a Klein foam $\Omega=(S,\Delta,\varphi)$ with $S$ consisting of connected components
$S_i$, $i=1,\ldots, r$. Let $\tau_i$ be the restriction of the (antiholomorphic) involution to $S_i$.

In \cite[Theorem 2.1]{CGN}, it was shown that there exists a connected Riemann surface $\check{S}$ with
an antiholomorphic involution $\check{\tau}$ and analytic nowhere locally constant maps $f:\check{\Omega}\to \check{S}$,
commuting with the involutions on $S$ and on $\check{S}$, such that the corresponding morphism of the foams establishes
an isomorphism between the fields of analytic functions on ${\Omega}$ and on $\check{S}$ respectively.
Consider the restrictions $f_i: S_i\to \check{S}$ of $f$ to $S_i$.

Let $\check{S}^\circ$ be the surface $\check{S}$ without all the critical values of the maps $f_i$
and $S^{\circ}= f^{-1}(\check{S}^\circ)$. Consider an uniformization $U\rightarrow U/\check{\Gamma}=\check{S}^\circ$.
(Excluding trivial cases we assume that $U$ is the upper half-plane in $\CC$ and $\Gamma$ is a Fuchsian group.)

The restriction $f^\circ_i:  S^\circ_i\to \check{S}^\circ$ of $f_i$ to $S^\circ_i= S_i\cap S^\circ$ is a covering
without ramification. Thus there exists a subgroup $\Gamma_i\subset\check{\Gamma}$, that uniformises
$S^\circ_i= U/\Gamma_i$ and generates the map $f^\circ_i$ by the inclusion $\Gamma_i\subset\check{\Gamma}$.
This subgroup $\Gamma_i$ is well-defined up to conjugation in $\Gamma$. Let $\sigma_i$
be a lifting to $U$ of the involution $\tau_i$ to $S^\circ_i$, such that
$\sigma_i\Gamma_i\sigma_i=\Gamma_i$. This $\sigma_i$ is well-defined up to conjugation in $\Gamma_i$.

The subgroup $\Gamma_i$ has a finite index in $\Gamma$ and there are only finitely many different
subgroups of $\check{\Gamma}$ conjugate to $\Gamma_i$. Let us consider the intersection of all
the subgroups $\Gamma_i$ and all their conjugates:
$$
\hat{\Gamma}=\bigcap_{j=1}^r\bigcap_{a\in\check{\Gamma}}a^{-1}\Gamma_j a\subset\check{\Gamma}\,.
$$

Since all the involutions $\tau_i$ give one and the same involution $\check{\tau}$ on the surface $\check{S}$,
one has $\sigma_i\sigma_j \in\check{\Gamma}$ for all $i$ and $j$. Thus $\sigma_ia\sigma_j\in \check{\Gamma}$
for any $a\in\check{\Gamma}$ and
\begin{eqnarray*}
\sigma_i\hat{\Gamma}\sigma_i&=&
\sigma_i\left(\bigcap_{j=1}^r\bigcap_{a\in\check{\Gamma}}a^{-1}\Gamma_j a\right)\sigma_i=\\
&{\ }&\bigcap_{j=1}^r\bigcap_{a\in\Gamma}
\sigma_ia^{-1}\sigma_j\Gamma_j \sigma_ja\sigma_i=
\bigcap_{j=1}^r\bigcap_{b\in\Gamma}b^{-1}\Gamma_j b=\hat{\Gamma}\,.
\end{eqnarray*}

Consider now the Riemann surface $\hat{S}^\circ = U/\hat{\Gamma}$. The involution $\sigma_i$ generates
an involution $\hat{\tau}^0_i$ on $\hat{S}^\circ$. The inclusion $\hat{\Gamma}\subset\check{\Gamma}$ generates a covering
$\Phi^\circ: \hat{S}^\circ\rightarrow \check{S}^\circ$. The inclusion $\check{S}^\circ\subset\check{S}$ generates a branching
covering of Riemann surfaces $\hat{S}\rightarrow\check{S}$. The involution $\hat{\tau}^0_i$ generates an antiholomorphic
involution $\hat{\tau}_i$ on $\hat{S}$. Moreover $\hat{\Gamma}$ is a normal subgroup of $\check{\Gamma}$ and thus the subgroup
$\check{\Gamma}/\hat{\Gamma}$ in a natural way is isomorphic to a subgroup $G\subset\Aut(\hat{S})$ of the group of biholomorphic
automorphisms of $\hat{S}$.

The subgroups $\Gamma_i$ from this construction also have an interpretation in terms of
the Riemann surface $\hat{S}$.
Let us consider the group $\Aut^*(\hat{S})$ of biholomorphic and antiholomorphic automorphisms of $\hat{S}$.
All the involutions $\hat{\tau}_i$ belong to $\Aut^*(\hat{S})$. Moreover, the involution $\sigma_i$ preserves
the subgroup $\Gamma_i\subset\check{\Gamma}$. The homomorphism $\phi: \check{\Gamma}/\hat{\Gamma} \rightarrow\Aut(\hat{S})$
maps the subgroup $\Gamma_i$ to a subgroup $G_i\subset G$, such that $\hat{\tau}_iG_i\hat{\tau}_i=G_i$.

Thus we have constructed an equipped family of real forms of a compact Riemann surface
corresponding to a Klein foam $\Omega=(S,\Delta,\varphi)$.
This family depends on the choice of the uniformizing subgroups $\Gamma_i$ and of the pull-backs $\sigma_i$.
This means that the family is defined up to equivalence.

Since the compressing of a real foam depends only on the maps $f_i$, the weak equivalence class of the foam $\Omega$
is determined only by the equivalence class of the corresponding equipped family
of real forms of a Riemann surface.
\end{proof}

\begin{remark}
 One can see that there are finitely many Klein foams corresponding to an equivalence class of
 equiped families of real forms of a Riemann surface. They are determined by some additional combinatorial data.
\end{remark}

\section{Some topological and analytical properties of Klein foams.}

Consider a Klein foam
$$
\Omega=(S,\Delta,\varphi), \quad \text{where} \quad   S=\coprod\limits_{i=1}^r(S_i,\tau_i)\,.
$$
Denote by $g_i$ and $k_i$ the genus of $S_i$ and the number of ovals of $S_i$
(i.e. the number of the connected components of $\partial(S_i/\tau_i)$) respectively.

We say that $(S_i,\tau_i)$ and $(S_j,\tau_j)$ are
\textit{foam-equivalent} if the images under $\varphi$ of the fixed point sets of $\tau_i$ and $\tau_j$ coincide.
Otherwise we say that $(S_i,\tau_i)$ and $(S_j,\tau_j)$ are \textit{foam-different}.

Recall that Klein surfaces $(S',\tau')$ and $(S'',\tau'')$ are isomorphic if there exists a biholomorphic map $h:S'\to S''$
such that $\tau''=h\tau'h^{-1}$. We say that a real algebraic curve $(S',\tau')$ \textit{covers} $(S'',\tau'')$
if there exists a holomorphic map $h:S'\to S''$ such that $\tau''h=h\tau'$. We say that Klein surfaces $(S',\tau')$
and $(S'',\tau'')$ are \textit{weakly equivalent} if there exists a Klein surface $(S^\ast,\tau^\ast)$ covering both
$(S',\tau')$ and $(S'',\tau'')$.

\vspace{1ex}

Consider the equipped family of real forms
$$
F=\{\hat{S}, G, (G_1,\hat{\tau}_1),\dots, (G_r,\hat{\tau}_r) \}
$$
corresponding to $\Omega$. The main topological invariants of the family are the genus $\hat{g}$ of $\hat{S}$,
the numbers of the ovals $\hat{k}_i= |\pi_0(\partial(\hat{S}/\hat{\tau}_i))|$ and the cardinalities $|G_i|$
of the subgroups. From the definitions it follow that $(S_i,\tau_i)=(\hat{S}/G_i, \hat{\tau}_i/G_i)$.
Therefore
$$
g_i\leq\frac{\hat{g}-1}{|G_i|}+1 \quad \text{and} \quad k_i\leq\hat{k}_i\,.
$$


Let us describe some properties of foams related with their genuses. These properties depend not only on
the topological properties of a foam $\Omega=(S,\Delta,\varphi)$, but also on a holomorphic structure on it.
Further on we assume that $\hat{g}>1$. The topological classification of the equipped families of real forms
for $\hat{g}\leq 1$ is more simple, but requires other methods.

\begin{corollary} Let $\Omega=(S,\Delta,\varphi)$
be a Klein foam such that $S$ contains the union $\coprod\limits_{i=1}^r S_i$ with all $(S_i,\tau_i)$ foam-different.
Then $\sum\limits_{i=1}^r k_i\leq 42(\hat{g}-1)$.
\end{corollary}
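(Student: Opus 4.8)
The plan is to reduce the inequality to a collective bound on the number of ovals of a system of real forms of the \emph{single} Riemann surface $\hat{S}$, and then to invoke the known estimate for that quantity. From the construction accompanying Theorem~\ref{T1} we have $(S_i,\tau_i)=(\hat{S}/G_i,\hat{\tau}_i/G_i)$, and projecting the fixed ovals of $\hat{\tau}_i$ along $\hat{S}\to\hat{S}/G_i$ gives the already recorded inequality $k_i\le\hat{k}_i$. Hence it suffices to show $\sum_{i=1}^r\hat{k}_i\le 42(\hat{g}-1)$, a statement purely about the real forms $\hat{\tau}_1,\dots,\hat{\tau}_r$ of $\hat{S}$, where $\hat{g}>1$.

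First I would record the group-theoretic structure of the family. As shown in the proof of Theorem~\ref{T1}, $\sigma_i\sigma_j\in\check{\Gamma}$ for all $i,j$, so every product $\hat{\tau}_i\hat{\tau}_j$ lies in $G=\check{\Gamma}/\hat{\Gamma}$. Thus all $\hat{\tau}_i$ belong to a single coset $G\hat{\tau}_1$, the group $\langle G,\hat{\tau}_1\rangle\subset\Aut^*(\hat{S})$ is finite with holomorphic part exactly $G$, and by the Hurwitz bound $|G|\le 84(\hat{g}-1)$. In particular the $\hat{\tau}_i$ are real forms all lying over the one involution $\check{\tau}$ on $\check{S}=\hat{S}/G$.

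Next I would use foam-differentness to see that $\hat{\tau}_1,\dots,\hat{\tau}_r$ are pairwise distinct involutions of $\hat{S}$. The set $\varphi(\mathrm{Fix}(\tau_i))\subset\Delta$ is exactly the image of $\mathrm{Fix}(\hat{\tau}_i)$ under $\hat{S}\to\check{S}=\hat{S}/G$, so it depends only on the involution $\hat{\tau}_i$; therefore $\hat{\tau}_i=\hat{\tau}_j$ would force $\varphi(\mathrm{Fix}(\tau_i))=\varphi(\mathrm{Fix}(\tau_j))$, i.e.\ $(S_i,\tau_i)$ and $(S_j,\tau_j)$ would be foam-equivalent. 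Moreover two distinct antiholomorphic involutions cannot share a whole oval, since agreement along a curve forces agreement everywhere, so the $\sum_i\hat{k}_i$ circles are genuinely distinct ovals of the distinct forms $\hat{\tau}_i$.

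Finally I would apply the collective bound on the total number of ovals of the real forms of a fixed Riemann surface, due to Natanzon and refined in the subsequent literature \cite{Na4,Na5,Na6,Na7,BCGG}: for $\hat{g}>1$ the total number of ovals of all pairwise distinct real forms of $\hat{S}$ does not exceed $42(\hat{g}-1)$, the extremal configuration being the one governed by the $(2,3,7)$ Hurwitz situation that also yields $|\Aut(\hat{S})|\le 84(\hat{g}-1)$. Applied to $\hat{\tau}_1,\dots,\hat{\tau}_r$ this gives $\sum_{i=1}^r\hat{k}_i\le 42(\hat{g}-1)$, and combined with $k_i\le\hat{k}_i$ it yields $\sum_{i=1}^r k_i\le 42(\hat{g}-1)$. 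The main obstacle is the sharp constant in this last step: Harnack's estimate $\hat{k}_i\le\hat{g}+1$ together with any bound on the number of forms only produces a bound growing faster than linearly in $\hat{g}$, so the linear estimate $42(\hat{g}-1)$ genuinely requires the refined NEC-group analysis of the whole system of forms rather than a form-by-form argument.
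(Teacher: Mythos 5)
Your proposal is correct and follows essentially the same route as the paper: reduce via $k_i\le\hat{k}_i$ to bounding $\sum_i\hat{k}_i$, observe that foam-different components give pairwise distinct real forms $\hat{\tau}_i$ of $\hat{S}$, and invoke Natanzon's bound of $42(\hat{g}-1)$ on the total number of ovals of all real forms of a surface of genus $\hat{g}>1$. The paper's proof is just a terser version of this, asserting without detail the distinctness step that you spell out.
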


\begin{proof}
The involutions $\hat{\tau}_i$ corresponding to foam-different Klein surfaces are different.
On the other hand, the sum of of the numbers of all the ovals of all the real forms of a Riemann surface of genus $\hat{g}>1$
is at most $ 42(\hat{g}-1)$ \cite{Na6,Na4}. Thus $\sum\limits_{i=1}^r \hat{k}_i\leq 42(\hat{g}-1)$.
\end{proof}

\begin{corollary} Let $\Omega=(S,\Delta,\varphi)$
be a Klein foam such that $S$ contains the union $\coprod\limits_{i=1}^r S_i$ with all $(S_i,\tau_i)$ not weakly equivalent.
Then $r\leq2(\sqrt{\hat{g}}+1)$ for any $\hat{g}>1$ and $r\leq4$, if $\hat{g}$ is even.
\end{corollary}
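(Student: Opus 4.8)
The plan is to transport the question to the classification of real forms of the single Riemann surface $\hat{S}$, using the correspondence of Theorem~\ref{T1}, and then to invoke the known sharp bound on the number of pairwise non-conjugate real forms of a Riemann surface of genus $\hat{g}>1$.

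First I would record what Theorem~\ref{T1} supplies: the foam $\Omega$ determines an equipped family $\{\hat{S}, G, (G_1,\hat{\tau}_1),\ldots,(G_r,\hat{\tau}_r)\}$ with $(S_i,\tau_i)=(\hat{S}/G_i,\hat{\tau}_i/G_i)$. Thus each quotient map $p_i:\hat{S}\to\hat{S}/G_i=S_i$ is holomorphic and satisfies $\tau_i p_i=p_i\hat{\tau}_i$, so $(\hat{S},\hat{\tau}_i)$ covers $(S_i,\tau_i)$ in the sense defined above.

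The central step is the implication: if the real forms $\hat{\tau}_i$ and $\hat{\tau}_j$ are conjugate in $\Aut(\hat{S})$, then the Klein surfaces $(S_i,\tau_i)$ and $(S_j,\tau_j)$ are weakly equivalent. Indeed, suppose $\hat{\tau}_j=h\hat{\tau}_i h^{-1}$ for some $h\in\Aut(\hat{S})$, so that $\hat{\tau}_j h=h\hat{\tau}_i$. Then the holomorphic map $p_j\circ h:\hat{S}\to S_j$ satisfies $\tau_j(p_j h)=(p_j\hat{\tau}_j)h=p_j(\hat{\tau}_j h)=p_j(h\hat{\tau}_i)=(p_j h)\hat{\tau}_i$, so $(\hat{S},\hat{\tau}_i)$ covers $(S_j,\tau_j)$ as well. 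Hence $(\hat{S},\hat{\tau}_i)$ is a common cover of $(S_i,\tau_i)$ and $(S_j,\tau_j)$, and the two are weakly equivalent. Taking the contrapositive, the hypothesis that all $(S_i,\tau_i)$ are pairwise \emph{not} weakly equivalent forces the involutions $\hat{\tau}_1,\ldots,\hat{\tau}_r$ to be pairwise non-conjugate in $\Aut(\hat{S})$.

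It then remains to count. The $\hat{\tau}_i$ are $r$ pairwise non-conjugate real forms of the single Riemann surface $\hat{S}$ of genus $\hat{g}>1$, so $r$ does not exceed the number of conjugacy classes of real forms of $\hat{S}$. By Natanzon's bound on the number of real forms of a Riemann surface \cite{Na6,Na4}, this number is at most $2(\sqrt{\hat{g}}+1)$ for every $\hat{g}>1$, and at most $4$ when $\hat{g}$ is even, which yields the two claimed inequalities. I expect the contrapositive step to be the only genuinely non-formal point: one must verify that conjugacy of the real forms of $\hat{S}$ really does produce one holomorphic curve covering both quotient Klein surfaces compatibly with their involutions. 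Once that is secured, the corollary is just the cited classification of real forms applied to $\hat{S}$.
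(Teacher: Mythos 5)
Your proposal is correct and follows the same route as the paper: it passes to the equipped family of real forms from Theorem~\ref{T1}, shows that pairwise non-weakly-equivalent components force the involutions $\hat{\tau}_i$ to be pairwise non-conjugate (you spell out the common-cover argument that the paper leaves implicit), and then applies the known bounds on the number of conjugacy classes of real forms. The only slip is bibliographic: the bound $2(\sqrt{\hat{g}}+1)$ is from \cite{Na5} and the bound $4$ for even genus is due to Gromadski--Izquierdo \cite{GI}, not \cite{Na6,Na4}, which concern the total number of ovals.
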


\begin{proof}
The involutions $\hat{\tau}_i$ corresponding to not weakly equivalent Klein surfaces are not conjugate in $\hat{G}$.
On the other hand, the number of conjugation classes of involution on a Riemann surface of genus $\hat{g}$ is at most
$2(\sqrt{\hat{g}}+1)$ for any $\hat{g}>1$ \cite{Na5} and at most  $4$ if $\hat{g}$ is even \cite{GI}.
\end{proof}

\begin{corollary} Let $\Omega=(S,\Delta,\varphi)$
be a Klein foam such that $S$ contains the union $\coprod\limits_{i=1}^r S_i$ with all $(S_i,\tau_i)$ not weakly equivalent.
In addition assume that  either $r=3, 4$ or all the surfaces $\hat{S}/G_i$ are orientable.
Then $\sum\limits_{i=1}^r k_i\leq 2\hat{g}-2+2^{r-3}(9-r)<2\hat{g}+30$.
\end{corollary}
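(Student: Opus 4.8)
The plan is to transport the question to the associated equipped family of real forms and then feed it into the sharp estimate for the total number of ovals of several non-conjugate real forms, exactly in the spirit of the two preceding corollaries. First I would reduce to an inequality on the single surface $\hat S$. By Theorem~\ref{T1} the foam $\Omega$ corresponds to an equipped family $\{\hat S, G, (G_1,\hat\tau_1),\dots,(G_r,\hat\tau_r)\}$, and the identification $(S_i,\tau_i)=(\hat S/G_i,\hat\tau_i/G_i)$ gives $k_i\le\hat k_i$ for every $i$, as recorded just before the corollaries (the branched covering $\hat S\to\hat S/G_i$ carries ovals of $\hat\tau_i$ onto ovals of $\tau_i$, possibly several onto one). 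Summing, $\sum_{i=1}^r k_i\le\sum_{i=1}^r\hat k_i$, so it suffices to bound the \emph{total} number of ovals $\sum_{i=1}^r\hat k_i$ of the real forms $\hat\tau_1,\dots,\hat\tau_r$ on $\hat S$, where $\hat g>1$.

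Next I would record the two features placing us in the scope of the refined estimate. As in the proof of the previous corollary, the hypothesis that the $(S_i,\tau_i)$ are pairwise not weakly equivalent forces the involutions $\hat\tau_i$ to be pairwise non-conjugate in the group of biholomorphic and antiholomorphic automorphisms of $\hat S$. The extra assumption is the type restriction: either $r\in\{3,4\}$, or the orientability hypothesis holds, which I read as saying that each quotient $\hat S/\hat\tau_i$ is orientable, i.e.\ each $\hat\tau_i$ is of separating (dividing) type, meaning the complement of its fixed-point set in $\hat S$ is disconnected. (Note that $\hat S/G_i$ is automatically orientable, since $G_i$ acts holomorphically; it is the separating condition on the involutions that is doing the work.) These are precisely the cases for which the theory of real forms supplies the sharp bound $\sum_{i=1}^r\hat k_i\le 2\hat g-2+2^{r-3}(9-r)$, and invoking it yields the first inequality of the corollary.

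Finally I would deduce the absolute constant by a short analysis of $h(r)=2^{r-3}(9-r)$ over integers $r\ge 3$: one has $h(3)=6,\ h(4)=10,\ h(5)=16,\ h(6)=24,\ h(7)=h(8)=32$, and $h(r)\le 0$ for $r\ge 9$, so $h(r)\le 32$ with the maximum attained only at $r=7,8$. Hence $\sum_{i=1}^r k_i\le 2\hat g-2+h(r)\le 2\hat g+30$. The strict inequality $\sum k_i<2\hat g+30$ asserted in the statement is the one point I would want to check carefully: it requires that the extremal value $h(7)=h(8)=32$ cannot be attained together with equality in the oval estimate under the non-conjugacy and separating hypotheses (for instance because that extremal configuration is ruled out, or because the cited bound is already strict in those boundary cases).

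The reduction through Theorem~\ref{T1}, the passage $k_i\le\hat k_i$, and the arithmetic of $h$ are all routine. The genuinely hard input — and the only real obstacle — is the sharp bound $2\hat g-2+2^{r-3}(9-r)$ itself: in contrast with the crude total $42(\hat g-1)$ used in the first corollary, it rests on the fine classification of how the ovals of several pairwise non-conjugate, and in particular separating, real forms can be distributed on a fixed Riemann surface, which is the deep ingredient drawn from the cited work on real forms.
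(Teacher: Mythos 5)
Your proposal takes essentially the same route as the paper, whose entire proof is the citation of the bound $\sum_{i}\hat{k}_i\leq 2\hat{g}-2+2^{r-3}(9-r)$ for pairwise non-conjugate real forms from \cite{Na4,Na7,Na8,Na9}; the reduction via Theorem~\ref{T1}, the passage $k_i\leq\hat{k}_i$, and the non-conjugacy of the $\hat{\tau}_i$ are exactly the (largely implicit) setup the paper relies on. Your two side observations are also correct and worth recording: the hypothesis as literally written ($\hat{S}/G_i$ orientable) is vacuous and must be read as a separating-type condition on the real forms, and the final strict inequality $2\hat{g}-2+2^{r-3}(9-r)<2\hat{g}+30$ fails to be strict at $r=7,8$, where $2^{r-3}(9-r)=32$.
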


\begin{proof}
The corresponding bounds for the number of involutions on a Riemann surface are proved in \cite{Na4,Na7,Na8,Na9}.
\end{proof}

\bigskip
{\bf Acknowledgements} Partially supported by the grants NSh--5138.2014.1, RFBR--13-01-00755 (S.G.-Z.), RFBR--13-02-00457 (S.N.).

Keywords: foam, real forms.

\bigskip
{\bf Acknowledgements} Partially supported by the grants NSh--5138.2014.1, RFBR--13-01-00755 (S.G.-Z.), RFBR--13-02-00457 (S.N.).

Keywords: foam, real forms.

\end{document}